\newtheorem{theorem}{Theorem}[section]
\newtheorem{lemma}[theorem]{Lemma}
\theoremstyle{definition}
\newtheorem{definition}[theorem]{Definition}
\theoremstyle{remark}
\newtheorem{remark}[theorem]{Remark}
\numberwithin{equation}{section}
\begin{document}

\title[ Nonlinear Generalized Collatz-Wielandt formula]{On a  generalized Collatz-Wielandt formula\\ and finding saddle-node bifurcations}


\author{Yavdat~Ilyasov}
\address{Institute of Mathematics of UFRC RAS, 112, Chernyshevsky str., 450008, Ufa,
Russia}
\curraddr{Instituto de Matem\'atica e Estat\'istica. Universidade Federal de Goi\'as,
74001-970, Goiania, Brazil}
\email{ilyasov02@gmail.com}


\subjclass[2000]{Primary 35B32, 49J52; Secondary 49R05, 65H10, 65P30}

\date{}


\keywords{ Saddle-node bifurcation,  Collatz-Wieland formula, Nonlinear system of equations}

\begin{abstract}
The Collatz-Wielandt   formula obtained by Lothar Collatz  (1942)  and  Helmut Wielandt (1950) provides a simple variational characterization of the Perron–Frobenius eigenvalue of certain types of matrices. 
In the present paper, we  substantiate the  nonlinear generalized Collatz-Wielandt  formula 
$$
\lambda^*= \sup_{x\in Q}\min_{i:h_i(x) \neq 0} \frac{g_i(x)}{ h_i(x)}, ~~Q \subset \mathbb{R}^n,
$$
and prove that its solution $(x^*,\lambda^*)$ yields the maximal saddle-node bifurcation for  systems of equations of the form:  
$
g_i(x)-\lambda  h_i(x)=0, ~~x \in Q.
$
Using this we introduce a  simply verifiable criterion for the detection of saddle-node bifurcations of a given system of equations. 
 We apply this criterion to prove the existence of the maximal saddle-node bifurcations for finite-difference approximations of nonlinear partial differential equations and for the system of power flow equations. 

\end{abstract}

\maketitle

\section{Introduction}

This paper is concerned with finding  bifurcations for the system of equations of the form:
	\begin{align}
	\tag{$f$}
	\label{f}
	f(x,\lambda):=g(x)-\lambda  h(x)=0, ~~~~x \in Q.
	\end{align}
	Here  $Q$ is an open  domain in $\mathbb{R}^n$,  $\lambda$  is a real parameter, $g, h: \mathbb{R}^n \to \mathbb{R}^n$ are continuously differentiable vector functions and we suppose that $h(x)\neq 0$, $h(x) \geq 0$  in $Q$. 	By a solution of system \eqref{f}, we mean a pair $(x, \lambda)$ which satisfies \eqref{f}.
	%


	A solution $(x^*,\lambda^*)$ of \eqref{f} is called   \textit{saddle-node  bifurcation} (or \textit{fold bifurcation},\textit{ turning point})   of \eqref{f} in $Q$ if there exists a $C^1$-map $(-a,a) \ni s \longmapsto (x(s),\lambda(s)) \in Q\times \mathbb{R}$
	for some $a>0$ such that 
	%
		%
		\begin{itemize}
			\item \textit{$(x(s),\lambda(s))$ satisfies  \eqref{f} for $ s \in (-a,a)$ and }$(x(0),\lambda(0))=(x^*,\lambda^*)$,
		\item ${\displaystyle \frac{d}{ds}\lambda(0)=0}$ and $
	\lambda(s) \in (-\infty,\lambda^*]$\textit{ or }$\lambda(s) \in  [\lambda^*,+\infty)$, $ \forall s \in (-a,a)$.

\end{itemize}


Bifurcation phenomena arise in many fields of science and technology, including quasi-classical and quantum physics, general relativity, analysis of voltage stability of power systems, neural networks, biology, ecology, and many others (see, e.g., \cite{Ajjar, keller1, kohli,  seydel}).

This paper aims to address the following problems:
\begin{itemize}
	\item  \textit{Finding general sufficient conditions under which a given system has a saddle-node bifurcation.}
	\item \textit{Finding an explicit variational formula for determining saddle-node bifurcations.}
\end{itemize}
Note that a spectral point $(\phi^*, \lambda^*)$ of a square matrix $A$ with $\lambda^* \in \mathbb{R}$ can be considered as a saddle-node bifurcation. Indeed, the map $x(s)=s \phi^*, \lambda(s)=\lambda^*$, $s \in \mathbb{R}$ satisfies all the above conditions  of the saddle-node bifurcation.
We emphasize that the spectral theory of linear operators offers a number of well-developed methods for solving the above problems. Moreover, the spectral theory provides constructive methods of finding spectral points, including the method of the characteristic polynomial,  the  Courant–Fischer–Weyl and Collatz–Wielandt variational principles.

For nonlinear problems, however, the situation is more complicated.  
The existing, well-known approaches, such as the Crandall–Rabinowitz \cite{grand}, Krasnoselskii \cite{krasn} and Rabinowitz \cite{rabin} theorems or the Vainberg–Trenogin branching method  \cite{Vainb}, etc., allow to prove the existence of bifurcations for nonlinear problems but omit the main question of how to find a point $(x^*,\lambda^*)$ satisfying the necessary saddle-node bifurcation properties (see \cite{keller1, kielh, kuzn}).

%

%

%

 %
%

The purpose of this article is to present a method which allows to obtain a complete answer to the question of the existence of  saddle-node bifurcations for given  systems of equations. Moreover, we are aimed to get a formula for determining bifurcations that would be useful in the numerical calculation of saddle-node bifurcations as well. In addition to this, we suppose  to provide  a contribution to the development of the Perron–Frobenius theory to the nonlinear problems.

Our approach is based on the extended functional method introduced in \cite{IlyasFunc}, however, the present paper is kept self-contained and full proofs are provided.

%
%


%

Let us state our main results. We shall look for a saddle-node bifurcation of \eqref{f} by means of the following  \textit{nonlinear generalized Collatz-Wieland formula} \cite{IlyasFunc}
	\begin{equation}\label{MiMaSI}
	\lambda^*=\max_{x\in Q}\lambda(x)\equiv \max_{x\in Q}\min_{i:h_i(x) \neq 0} \frac{g_i(x)}{ h_i(x)}.
	\end{equation}
Here $\lambda(x):= \min_{i:h_i(x) \neq 0}  \frac{g_i(x)}{ h_i(x)}$, $x\in Q$, is called the \textit{ functional of bifurcations}.  
Denote by $J_x f(x, \lambda)=(\frac{\partial f}{\partial x_i})_{1\leq i \leq n}$ the Jacobian matrix of $f(x,\lambda)$. 

We assume  the following condition:

\medskip

\begin{description}
	\item[(R)] \textit{ $J_x f(x,\lambda)$  is  irreducible off-diagonal sign-constant matrix for all $ x \in  Q$ and $\lambda \in \mathbb{R}$}.
\end{description}

\medskip

Hereafter, a real square matrix $A$ is called \textit{off-diagonal sign-constant} if one of the following is true: $a_{ij}\geq 0$ or, $a_{ij}\leq 0$ for all $i,j \in \{1,\ldots,n\}$, $i\neq j$. The matrix $A$ is irreducible if it cannot be conjugated into block upper triangular form by a permutation matrix $P$ (see  \cite{Varga}).

Our main result is as follows
\begin{theorem}\label{thm1}  Assume that {\rm\textbf{(R)}} holds true and $h(x)\neq 0$, $h(x) \geq 0$ in $Q$. Suppose  $\lambda^*<+\infty$. 
\begin{description}
	\item[$(1^o)$] Then \eqref{f} has no solutions in $Q$ for any $\lambda >\lambda^*$.
	\item[$(2^o)$] If there exists  a  maximizer  $x^* \in Q$ of $\lambda(x)$  such that $h_i(x^*)\neq  0$, $\forall i=1,\ldots,n$, then $(x^*,\lambda^*)$ is a  maximal saddle-node bifurcation point  of \eqref{f} in $Q$. 	Moreover,  ${\rm dim\, Ker}(J_x f(x^*,\lambda^*))=1$ and both right and left eigenvectors of $J_x f(x^*,\lambda^*)$ are  strongly positive.
\end{description}
\end{theorem}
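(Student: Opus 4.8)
The plan is to establish $(1^o)$ directly and then to extract all of the spectral information in $(2^o)$ from a first-order analysis of the nonsmooth functional $\lambda(x)$, feeding the resulting nonnegative multipliers into the Perron--Frobenius theory that condition \textbf{(R)} makes available. For $(1^o)$, suppose $(x,\lambda)$ solves \eqref{f} in $Q$. Then $g_i(x)=\lambda h_i(x)$ for every $i$, and since $h(x)\ge 0$, $h(x)\neq 0$, every index with $h_i(x)\neq 0$ satisfies $g_i(x)/h_i(x)=\lambda$; hence $\lambda(x)=\lambda$ and therefore $\lambda\le\lambda^*$. Thus no solution can have $\lambda>\lambda^*$, which is $(1^o)$ and simultaneously the meaning of ``maximal''.

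For $(2^o)$ I would first exploit the optimality of $x^*$. Because $h_i(x^*)\neq 0$ for all $i$ and $Q$ is open, near $x^*$ the ratios $\phi_i:=g_i/h_i$ are $C^1$ and $\lambda(x)=\min_{1\le i\le n}\phi_i(x)$. Writing $I^*=\{i:\phi_i(x^*)=\lambda^*\}$ for the active set, the one-sided directional derivative of this lower envelope at the interior maximizer $x^*$ is $\lambda'(x^*;d)=\min_{i\in I^*}\langle\nabla\phi_i(x^*),d\rangle$, and it must be $\le 0$ for every direction $d$. Equivalently, the system $\langle\nabla\phi_i(x^*),d\rangle>0$ ($i\in I^*$) is unsolvable, so Gordan's theorem furnishes multipliers $\mu_i\ge 0$ with $\sum_{i\in I^*}\mu_i=1$ and $\sum_{i\in I^*}\mu_i\nabla\phi_i(x^*)=0$. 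A short computation using $g_i(x^*)=\lambda^*h_i(x^*)$ on $I^*$ gives $\nabla\phi_i(x^*)=h_i(x^*)^{-1}(\nabla g_i-\lambda^*\nabla h_i)(x^*)$, which is $h_i(x^*)^{-1}$ times the $i$-th row of $J:=J_xf(x^*,\lambda^*)$. Hence the vector $\psi$ defined by $\psi_i=\mu_i/h_i(x^*)$ on $I^*$ and $\psi_i=0$ elsewhere satisfies $\psi\ge 0$, $\psi\neq 0$, and $\psi^{T}J=0$: it is a nonnegative left null vector of the Jacobian.

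Next I would invoke \textbf{(R)}. Since the off-diagonal entries of $J$ have a fixed sign, for large $c>0$ one of $B:=J+cI$ or $B:=cI-J$ is entrywise nonnegative and irreducible, and in either case $\psi^{T}B=c\,\psi^{T}$. An irreducible nonnegative matrix admits a nonnegative eigenvector only at its spectral radius, where the eigenvector is strongly positive and the eigenvalue algebraically simple; therefore $c=\rho(B)$, $\psi\gg 0$, and $0$ is an algebraically simple eigenvalue of $J$ with strongly positive right eigenvector $\phi$ (the Perron vector of $B$) and strongly positive left eigenvector $\psi$. In particular $\dim\operatorname{Ker}J=1$. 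Crucially, $\psi\gg 0$ forces $I^*=\{1,\dots,n\}$, so $g_i(x^*)=\lambda^*h_i(x^*)$ for every $i$ and $(x^*,\lambda^*)$ is already a solution of \eqref{f}.

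Finally I would build the branch and read off the saddle-node properties. With $F:=f$ one has $F_\lambda(x^*,\lambda^*)=-h(x^*)$, and since $\operatorname{Range}(J)=\{y:\psi^{T}y=0\}$ while $\psi^{T}h(x^*)>0$ (because $\psi\gg 0$, $h(x^*)\ge 0$, $h(x^*)\neq 0$), the vector $F_\lambda$ lies outside $\operatorname{Range}(J)$. Thus the augmented $n\times(n+1)$ matrix $[\,J\mid F_\lambda\,]$ has full rank $n$, and the implicit function theorem presents the solution set of \eqref{f} near $(x^*,\lambda^*)$ as a $C^1$ curve $s\mapsto(x(s),\lambda(s))$ with $(x(0),\lambda(0))=(x^*,\lambda^*)$ and nonzero tangent spanning $\operatorname{Ker}[\,J\mid F_\lambda\,]$. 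Applying $\psi^{T}$ to $Jx'(0)+\lambda'(0)F_\lambda=0$ and using $\psi^{T}J=0$, $\psi^{T}F_\lambda\neq 0$ yields $\lambda'(0)=0$, so $x'(0)\in\operatorname{Ker}J\setminus\{0\}$ and the branch genuinely moves in $x$. By $(1^o)$ every point of the curve obeys $\lambda(s)\le\lambda^*$, i.e. $\lambda(s)\in(-\infty,\lambda^*]$, which together with $\lambda'(0)=0$ is exactly the definition of a saddle-node bifurcation. I expect the two delicate points to be the nonsmooth optimality step, where one must justify the directional-derivative formula for the lower envelope and apply Gordan's theorem to obtain nonnegative multipliers, and the uniform handling of both off-diagonal sign cases in the Perron--Frobenius argument that converts the nonnegative left null vector into algebraic simplicity and strong positivity.
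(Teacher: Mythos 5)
Your proposal is correct, and its overall skeleton coincides with the paper's: first-order optimality of the nonsmooth functional $\lambda(x)$ at the interior maximizer produces a nonnegative, nonzero left null vector of $J_xf(x^*,\lambda^*)$; condition \textbf{(R)} and Perron--Frobenius theory then force the active set to be all of $\{1,\dots,n\}$ (so $(x^*,\lambda^*)$ solves \eqref{f}), give $\dim\operatorname{Ker}J_xf(x^*,\lambda^*)=1$ with strongly positive left and right eigenvectors, and the transversality condition $f_\lambda\notin R(J_xf)$ follows from $\langle h(x^*),\xi^*\rangle\neq 0$; maximality comes from $(1^o)$. Where you differ is in which standard tools you plug in at two junctures, and in both cases your version is more self-contained. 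First, to extract the multipliers you use the Danskin-type directional-derivative formula for a finite lower envelope of $C^1$ functions together with Gordan's theorem of the alternative, whereas the paper (Lemma \ref{DemNeceAppen}) reformulates the maximization of $\lambda(x)$ as a smooth constrained program and invokes the Lagrange multiplier rule; these are equivalent routes to the same Fritz John conditions. Second, you prove the saddle-node property from scratch: full row rank of the augmented Jacobian $[\,J\mid f_\lambda\,]$, the implicit-function-theorem parametrization of the solution curve, and the computation $\psi^TJx'(0)+\lambda'(0)\psi^Tf_\lambda=0\Rightarrow\lambda'(0)=0$, combined with $(1^o)$ to pin $\lambda(s)\le\lambda^*$. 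The paper instead quotes this as a known result (Lemma \ref{lemkell}, attributed to the Crandall--Rabinowitz/Keller circle of ideas); your inline argument is essentially the standard proof of that lemma. Likewise you make explicit the shift argument ($B=J+cI$ or $B=cI-J$) behind the paper's citation of Varga. The net effect is a proof with the same mathematical content but no black boxes; the paper's version is shorter at the cost of outsourcing those two steps to the literature.
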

We call $ \zeta \in {\rm Ker} J_x f(x,\lambda)$ and $ \xi \in {\rm Ker}(J_x f(x,\lambda))^T$ the \textit{right and left eigenvectors} of  $J_x f(x,\lambda)$. A saddle-node bifurcation  $(x^*,\lambda^*)$ of \eqref{f} is said to be  \textit{maximal}  in $Q$ if  $\bar{\lambda}\leq  \lambda^*$  for any other saddle-node bifurcation  $(\bar{x},\bar{\lambda})$ of \eqref{f} in  $Q$.

It is important to note that variational formula \eqref{MiMaSI} allows finding the saddle-node bifurcations numerically. Such investigations with related numerical experiments were initiated in \cite{IlIvan2,IlIvan1}. Notice that problem \eqref{MiMaSI} belongs to a class of nonsmooth optimization problems.  The theory of nonsmooth optimization has been intensively developed over the past few decades, and, at present, there are various powerful numerical methods for solving such problems (see, e.g., \cite{bagir1,  Demyan, DemyanSPP} and references therein). 
Thus,  along with the well-known methods for numerically finding bifurcations (\cite{keller1, kuzn, seydel}), we can  use the full range of nonsmooth optimization methods. 




\begin{remark}
To the best of our knowledge, the results on the existence of saddle-node bifurcations for finite-difference approximations of nonlinear boundary value problems proved in Section 4 have not been known before.
\end{remark}

\begin{remark}
As noted above, the nonlinear generalized Collatz-Wieland formula \eqref{MiMaSI} is a finite-dimensional  version of an infinite-dimensional minimax principle \cite{IlyasFunc}. This minimax principle has been used to solve various theoretical problems from  the nonlinear partial differential equations  \cite{BobkovIlyasov, ilcras} including problems which are not directly related to the finding of bifurcations (see e.g. \cite{IlD,IlRunst}).
\end{remark}

The paper is organized as follows. In Section 2, we give some
preliminaries.  Theorems \ref{thm1} is proved in Sections 3. In Section 4, we present some examples of applications of Theorem \ref{thm1}.

\section{Preliminaries}

		Hereafter, $\|\cdot\|$ and $\left\langle \cdot ,\cdot \right\rangle$ stand for the Euclidean norm and the scalar product in $\mathbb{R}^n$, respectively;    ${ \nabla_x=(\frac{\partial }{\partial x_1} \cdots \frac{\partial }{\partial x_n})^T}$  denotes the gradient. 	
Furthermore, we write $h(x)\neq 0$ in $Q$ if $ \forall x \in Q$, $\exists i \in \{1,...,n\}$ s.t. $h_i(x)\neq 0$. Denote $\accentset{\circ}{Q}= \bigcap_{i=1}^n \{x\in Q: ~h_i(x)\neq  0\}$.
In what follows, we say that a solution  $(\bar{x},\bar{\lambda})$ of \eqref{f} is \textit{maximal}  in $Q$ if \eqref{f}  has no solution $(x,\lambda)$ such that $\lambda>\bar{\lambda}$ and $x\in Q$.

The proof of the next lemma can be obtained from many sources (see, e.g.,  \cite{grand,keller1,kielh, Vainb}). 
\begin{lemma}\label{lemkell}
Assume that $Q$ is an open domain in $\mathbb{R}^n$.
Suppose that  $(x^*, \lambda^*)$ is a maximal  solution of \eqref{f} in $Q$ such that 
$$
{\rm dimKer}(J_x f(x^*, \lambda^*))=1~~\mbox{and}~~
	\frac{\partial}{\partial \lambda} f(x^*, \lambda^*) \not\in R(J_x f(x^*, \lambda^*))
$$
Then $(x^*,\lambda^*)$ is a maximal saddle-node bifurcation point of \eqref{f} in $Q$.
\end{lemma}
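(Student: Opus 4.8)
The plan is to turn the two hypotheses into surjectivity of the full $n\times(n+1)$ Jacobian $[\,J_x f(x^*,\lambda^*)\mid \tfrac{\partial}{\partial \lambda}f(x^*,\lambda^*)\,]$, apply the Implicit Function Theorem to obtain a $C^1$ solution curve through $(x^*,\lambda^*)$, and then read off the two defining properties of a saddle-node bifurcation from the tangent of this curve together with maximality. Write $f_\lambda:=\tfrac{\partial}{\partial \lambda}f(x^*,\lambda^*)$ for brevity, fix a spanning vector $\zeta$ of the one-dimensional $\mathrm{Ker}\,J_x f(x^*,\lambda^*)$, and fix a spanning vector $\xi$ of $\mathrm{Ker}\,(J_x f(x^*,\lambda^*))^T$. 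Since $v\in R(J_x f(x^*,\lambda^*))$ if and only if $\langle \xi,v\rangle=0$, the range condition $f_\lambda\notin R(J_x f(x^*,\lambda^*))$ is equivalent to $\langle \xi,f_\lambda\rangle\neq 0$, which is the form I would actually use.

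For the Implicit Function Theorem step I would pick an index $i_0$ with $\zeta_{i_0}\neq 0$. Because $\dim\mathrm{Ker}\,J_x f(x^*,\lambda^*)=1$, the relation $\sum_j \zeta_j\,\partial f/\partial x_j=0$ expresses the $i_0$-th column of $J_x f(x^*,\lambda^*)$ through the others, so the remaining $n-1$ columns form a basis of the $(n-1)$-dimensional range $R(J_x f(x^*,\lambda^*))$. Adjoining $f_\lambda$, which lies outside that range, yields $n$ linearly independent vectors, i.e. an invertible $n\times n$ matrix. Hence the Implicit Function Theorem lets me solve $f=0$ for the $n-1$ remaining components of $x$ and for $\lambda$ as $C^1$ functions of the parameter $s:=x_{i_0}-x^*_{i_0}$, producing a $C^1$ map $s\mapsto(x(s),\lambda(s))$ with $(x(0),\lambda(0))=(x^*,\lambda^*)$ solving \eqref{f}; since $Q$ is open, $x(s)\in Q$ on some interval $(-a,a)$.

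Next I would compute the tangent. Differentiating $f(x(s),\lambda(s))\equiv 0$ at $s=0$ gives $J_x f(x^*,\lambda^*)\dot x(0)+\dot\lambda(0)\,f_\lambda=0$. Pairing with $\xi$ kills the first term, and $\langle\xi,f_\lambda\rangle\neq 0$ then forces $\dot\lambda(0)=0$; consequently $J_x f(x^*,\lambda^*)\dot x(0)=0$, so $\dot x(0)\in\mathrm{span}\{\zeta\}$, and the normalization $s=x_{i_0}-x^*_{i_0}$ gives $\dot x(0)=\zeta/\zeta_{i_0}\neq 0$, confirming the parametrization is regular. This already delivers the first bullet of the saddle-node definition together with the requirement $\tfrac{d}{ds}\lambda(0)=0$.

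It remains to secure the one-sided behaviour of $\lambda(s)$ and the maximality of the bifurcation, and this is exactly where the maximality hypothesis does the work that a second-order nondegeneracy condition would otherwise have to do. Because $(x^*,\lambda^*)$ is a maximal solution of \eqref{f} in $Q$ and each $(x(s),\lambda(s))$ is a solution with $x(s)\in Q$, no value $\lambda(s)$ can exceed $\lambda^*$; thus $\lambda(s)\in(-\infty,\lambda^*]$ for all $s\in(-a,a)$, which is the second bullet. Finally, any other saddle-node bifurcation $(\bar x,\bar\lambda)$ of \eqref{f} in $Q$ is in particular a solution, so maximality gives $\bar\lambda\le\lambda^*$, i.e. $(x^*,\lambda^*)$ is a maximal saddle-node bifurcation. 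The one point I would check most carefully is precisely this interplay: transversality alone yields $\dot\lambda(0)=0$ but leaves open whether the branch folds or crosses, and it is the maximality of $(x^*,\lambda^*)$—rather than any Morse-type computation of $\langle\xi,f_{xx}(\zeta,\zeta)\rangle$—that pins $\lambda(s)$ to one side of $\lambda^*$.
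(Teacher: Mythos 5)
Your proof is correct. Note that the paper itself does not prove Lemma \ref{lemkell} at all --- it delegates the proof to the literature (Crandall--Rabinowitz, Keller, Kielh\"ofer, Vainberg--Trenogin), so there is no in-paper argument to compare against; what you have written is a complete, self-contained version of the standard fold-point argument from exactly those sources. Your key steps all check out: the equivalence of the range condition with $\langle \xi, f_\lambda\rangle \neq 0$ via $R(A)=({\rm Ker}\,A^T)^\perp$; the observation that deleting the dependent column $i_0$ of $J_x f(x^*,\lambda^*)$ leaves a basis of the $(n-1)$-dimensional range, so that bordering by $f_\lambda$ produces an invertible $n\times n$ matrix; the Implicit Function Theorem parametrization by $s=x_{i_0}-x^*_{i_0}$; and the tangent computation giving $\dot\lambda(0)=0$ and $\dot x(0)\in{\rm span}\{\zeta\}$. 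The one place where your argument departs from the textbook fold theorem is also the place where you are most careful, and rightly so: the classical statement uses a second-order condition $\langle\xi, f_{xx}(\zeta,\zeta)\rangle\neq 0$ to force the branch to fold, but the lemma does not assume this, and indeed cannot --- instead the hypothesis that $(x^*,\lambda^*)$ is a \emph{maximal} solution pins $\lambda(s)\leq\lambda^*$ along the entire curve, which is all the paper's definition of a saddle-node bifurcation requires (one-sidedness plus $\dot\lambda(0)=0$, with no nondegeneracy of $\ddot\lambda(0)$). This is precisely the reading of the lemma that makes Theorem \ref{thm1} work, since the variational characterization \eqref{MiMaSI} supplies maximality but no second-order information.
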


Let us introduce 
\begin{equation*}\label{ND}
	N(x)=\{i\in \{1, \ldots,n\}:~r_i(x)=\lambda(x), ~h_i(x)\neq 0\},~~x \in Q,
\end{equation*} 
where $\displaystyle{r_i(x):= \frac{g_i(x)}{ h_i(x)}}$ for $x \in Q$ s.t. $h_i(x)\neq 0$, $i=1, \ldots,n$. Denote by $|N(x)|$  the number of elements in $N(x)$. 
\begin{remark}\label{rem:1}
	For $x \in Q$, the condition	$|N(x)|=n$ (i.e., $N(x)=\{1, \ldots,n\}$) is satified  if and only if $(x, \lambda(x))$ is a solution of \eqref{f} and $~h_i(x)\neq 0$, $\forall i \in \{1, \ldots,n\}$.
\end{remark}

Introduce the following convex hull 
	\begin{align*}\label{CH1}
	\partial \lambda(x):=\{z=\sum_{i \in N(x)}\zeta_i\nabla_x r_i(x):~\sum_{i \in N(x)}\zeta_i=1,~ \zeta_i\geq 0,~ i\in N(x)\}.
	\end{align*}
\begin{definition}\label{Def:St}
		 A point $\hat{x} \in Q$ is called \textit{stationary point} of $\lambda(x)$ if   $0 \in \partial \lambda(\hat{x})$.
\end{definition}
For $h_i(x)\neq 0$ we have
\begin{equation}\label{diffr}
	\nabla_x r_i(x)=\frac{1}{ h_i(x)} (\nabla_x g_i(x)-r_i(x)\nabla_x h_i(x) )=\frac{1}{ h_i(x)} \nabla_x f_i(x,\lambda)|_{\lambda=r_i(x)},
\end{equation}
and thus, for  stationary point $\hat{x}$ of $\lambda(x)$ we have
$$
\sum_{i \in N(\hat{x})} \zeta_i\nabla_x r_i(\hat{x})|_{\lambda=\lambda(\hat{x})}=(J_x f(\hat{x},\lambda(\hat{x})))^T\hat{\xi}=0,
$$
where $\hat{\xi}_i=\zeta_i/h_i(\hat{x})$ for $i\in N(\hat{x})$, $\hat{\xi}_i=0$ for $i \in \{1, \ldots,n\}\setminus N(\hat{x})$ and $\sum_{i \in N(\hat{x})}\zeta_i=1,~ \zeta_i\geq 0,~ i\in N(\hat{x})$.

A point  $\hat{x} \in Q$ is called local maximizer of $\lambda(x)$ in $Q$ if there exists a neighbourhood $ U(\hat{x}) \subset Q$ of $\hat{x}$ such that  $\lambda(\hat{x}) \leq \lambda(x)$ for any $x \in  U(\hat{x})$.

\begin{lemma}\label{DemNeceAppen} 
Let $\hat{x}$ be a local maximizer of the bifurcation functional $\lambda(x)$ in $Q$ and $\hat{x} \in \accentset{\circ}{Q}$. Then there exist real numbers $\zeta_i$, $i=1,\ldots, n$ such that   $\zeta_i\geq 0$, for $i\in N(\hat{x})$ and $\zeta_i= 0$, for $i\in \{1,\ldots,n\}\setminus N(\hat{x})$, $\sum_{i \in N(\hat{x})} \zeta_i =1$ and $$\sum_{i \in N(\hat{x})}\zeta_i \nabla_x r_i(\hat{x})= 0.$$
\end{lemma}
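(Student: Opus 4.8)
The plan is to recognize this as the classical first-order necessary optimality condition for \emph{maximizing} a pointwise minimum of smooth functions, and to prove it directly via the one-sided directional derivative together with a separation argument. First I would exploit the hypothesis $\hat{x}\in\accentset{\circ}{Q}$: since every $h_i(\hat{x})\neq 0$, each quotient $r_i$ is $C^1$ in a neighbourhood of $\hat{x}$, and the inactive indices satisfy $r_j(\hat{x})>\lambda(\hat{x})$ for $j\notin N(\hat{x})$, a strict inequality that persists on a small ball by continuity. Hence there is a ball $B\subset Q$ centred at $\hat{x}$ on which the minimum defining $\lambda$ is attained only at active indices, so $\lambda(x)=\min_{i\in N(\hat{x})}r_i(x)$ for $x\in B$, and only the active gradients $\nabla_x r_i(\hat{x})$, $i\in N(\hat{x})$, enter the analysis.

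Next I would compute the one-sided directional derivative of $\lambda$ at $\hat{x}$. Fix a direction $d\in\reals^n$. For small $t>0$ the point $\hat{x}+td$ lies in $B$, and a first-order Taylor expansion of each active quotient (using $r_i(\hat{x})=\lambda(\hat{x})$) gives
$$
r_i(\hat{x}+td)=\lambda(\hat{x})+t\,\langle\nabla_x r_i(\hat{x}),d\rangle+o(t),\qquad i\in N(\hat{x}).
$$
Taking the minimum over the finitely many active indices and using the previous paragraph yields the Danskin-type formula
$$
\lambda'(\hat{x};d)=\lim_{t\to 0^+}\frac{\lambda(\hat{x}+td)-\lambda(\hat{x})}{t}=\min_{i\in N(\hat{x})}\langle\nabla_x r_i(\hat{x}),d\rangle .
$$
Because $\hat{x}$ is a local maximizer, moving in any direction cannot increase $\lambda$ to first order, so $\lambda'(\hat{x};d)\leq 0$; equivalently, for every $d\in\reals^n$ there is at least one active index with $\langle\nabla_x r_i(\hat{x}),d\rangle\leq 0$.

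Finally I would convert this sign condition into the stated convex combination by a theorem-of-the-alternative argument. The set $C=\partial\lambda(\hat{x})=\{\,\sum_{i\in N(\hat{x})}\zeta_i\nabla_x r_i(\hat{x}):\zeta_i\geq 0,\ \sum_{i\in N(\hat{x})}\zeta_i=1\,\}$ is compact and convex. If $0\notin C$, the separating hyperplane theorem provides $d\in\reals^n$ with $\langle v,d\rangle>0$ for all $v\in C$, whence $\min_{i\in N(\hat{x})}\langle\nabla_x r_i(\hat{x}),d\rangle>0$, contradicting the sign condition from the previous step; this is exactly Gordan's theorem of the alternative applied to the vectors $\nabla_x r_i(\hat{x})$. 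Therefore $0\in C$, which is precisely the existence of weights $\zeta_i\geq 0$ with $\sum_{i\in N(\hat{x})}\zeta_i=1$ and $\sum_{i\in N(\hat{x})}\zeta_i\nabla_x r_i(\hat{x})=0$; extending by $\zeta_j=0$ for $j\notin N(\hat{x})$ gives the claim. I expect the only delicate point to be the justification of the directional derivative formula, namely the uniform separation of the inactive indices on a neighbourhood of $\hat{x}$ that guarantees the limit exists and reduces to the minimum over active gradients; everything after that is the standard separation (Gordan) step.
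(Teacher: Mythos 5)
Your proposal is correct, but it proves the lemma by a genuinely different route than the paper. The paper reformulates the local maximization of $\lambda(x)=\min_i r_i(x)$ as the constrained program ``maximize $\lambda$ subject to $r_i(x)\geq\lambda$, $i=1,\dots,n$'' and invokes a Lagrange multiplier (Fritz John) rule, citing Theorem 48.B of Zeidler: this yields multipliers $\mu_0,\mu_1,\dots,\mu_n\leq 0$ with $\mu_0=\sum_i\mu_i$, $\sum_i\mu_i\nabla_x r_i(\hat{x})=0$, and complementary slackness $\mu_i(r_i(\hat{x})-\lambda)=0$; the sign conditions force $\mu_0\neq 0$, normalization $\mu_0=-1$ gives the convex weights, and complementary slackness (rather than any localization argument) is what annihilates the inactive indices. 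You instead give a self-contained nonsmooth-analysis proof: localize so that only active indices matter, establish the Danskin-type formula $\lambda'(\hat{x};d)=\min_{i\in N(\hat{x})}\langle\nabla_x r_i(\hat{x}),d\rangle$, deduce $\lambda'(\hat{x};d)\leq 0$ from local maximality, and conclude $0\in\partial\lambda(\hat{x})$ by separation/Gordan's theorem. Your argument is essentially the classical Demyanov--Malozemov minimax optimality condition (the paper cites that book but does not use it in this proof); its advantage is that it is elementary and avoids the black-box multiplier theorem, at the cost of having to justify the directional-derivative formula and the uniform separation of inactive indices -- precisely the point you flag as delicate, and which you handle correctly since $\hat{x}\in\accentset{\circ}{Q}$ guarantees all $r_i$ are $C^1$ near $\hat{x}$ and inactive indices stay strictly above the minimum on a ball. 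The paper's route is shorter modulo the cited theorem and generalizes directly to settings with additional constraints, while yours makes the convex-hull condition $0\in\partial\lambda(\hat{x})$ (Definition \ref{Def:St}) appear as the natural stationarity object rather than as a repackaging of multipliers.
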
 
\begin{proof} Let $\hat{x} \in \accentset{\circ}{Q}$ be a local maximizer of $\lambda(x)$. Then 
$$
\hat{\lambda}=\lambda(\hat{x})=\max_{x\in U(\hat{x})}\lambda(x)=\max_{x\in U(\hat{x})}\min_{i} r_i(x).
$$
for some neighbourhood $ U(\hat{x}) \subset \accentset{\circ}{Q}$.
Then $(\hat{\lambda}, \hat{x})$ is a maximizer of the following constraint maximization problem
\begin{equation}\label{EqMIM}
	\begin{cases}
	&\text {maximize} ~~\, \lambda,\\
	& \text {subject to:}~~ r_i(x)\geq \lambda, ~i=1,\ldots,n,\\
		&~~~~~~~~~~~~~~~~~x \in  U(\hat{x}),~\lambda \in \mathbb{R}.
\end{cases}
\end{equation}
Hence, the Lagrange multiplier rule (see Theorem 48.B in \cite{zeidler}) implies that there exist $\mu_0, \mu_1, \ldots, \mu_n \in \mathbb{R}$ such that $\sum_i |\mu_i|\neq 0$ and 
\begin{align}
  &\mu_0\frac{d}{d\lambda} \lambda+\sum_{i=1}^n \mu_i\frac{d}{d\lambda}(r_i(\hat{x})- \lambda)=0~\Leftrightarrow ~\mu_0-\sum_{i=1}^n \mu_i=0,\label{Lag1}\\
	&\sum_{i=1}^n\mu_i\nabla_x r_i(\hat{x})= 0,\label{Lag}\\
	&\mu_i\leq 0,~~ i=0,\ldots,n,\label{Lag3}\\
	&\mu_i(r_i(\hat{x})- \lambda)=0, ~~i=1,\ldots,n.\label{CS}
\end{align}
From \eqref{Lag1} and \eqref{Lag3} it follows that $\mu_0\neq 0$. Thus, we may assume that $\mu_0=-1$  and consequently, $\sum_{i=1}^n \mu_i=-1$.
Set $\zeta_i=-\mu_i$, $i=1,\ldots, n$. Then $\zeta_i \geq 0$, $i=1,\ldots, n$ and by \eqref{CS}, $\zeta_i= 0$ for any $i\in \{1,\ldots,n\}\setminus N(\hat{x})$. Finally, taking into account \eqref{Lag1}, \eqref{Lag} we conclude the proof of Lemma \ref{DemNeceAppen}.  
\end{proof}



\section{Proof of Theorems \ref{thm1}}

$(1^o)$ Suppose $\lambda^*<+\infty$. Let us show that \eqref{f} has no solutions in $Q$ for any $\lambda >\lambda^*$.  Conversely, suppose that there exists a solution $(x_\lambda,\lambda)$ of \eqref{f} in $Q$ with $\lambda>\lambda^*$. Then  \eqref{MiMaSI} implies  $\min_{i:h_i(x) \neq 0} r_i(x_\lambda)\leq \lambda^*<\lambda$. Hence, there exists $i_0 \in \{1, \ldots,n\}$ such that $ r_{i_0}(x_\lambda)< \lambda$ or, equivalently $g_{i_0}(x_\lambda)-\lambda h_{i_0}(x_\lambda)<0$ which is a contradiction.

$(2^o)$ Let $x^*\in Q$ be a maximizer in \eqref{MiMaSI}  such that $x^* \in \accentset{\circ}{Q}$. Then Lemma \ref{DemNeceAppen}  implies that there exist real numbers $\zeta_i$, $i\in N(x^*)$ such that  $\zeta_i\geq 0$ for $i\in N(x^*)$,
 $\sum_{i} \zeta_i =1$ and 
$$
\sum_{i \in N(x^*)}\zeta_i \nabla_x r_i(x^*)=(J_x f(x^*,\lambda(x^*)))^T\xi^*=0,
$$
where we set  $\xi^*_i=\zeta_i/h_i(x^*)$ for $i\in N(x^*)$ and $\xi^*_i=0$ for $i \in \{1, \ldots, n\}\setminus N(x^*)$. 
Due to assumption {\rm\textbf{(R)}}, this is possible only if $|N(x^*)|=n$ and $\xi^*_i>0$ for all $i=1,\ldots,n$. Thus $ \xi^* \in {\rm Ker}(J_x f(x^*,\lambda^*))^T$ and the equality $|N(x^*)|=n$ implies that  $(x^*,\lambda^*)$ is a solution of \eqref{f}. Now taking into account that \eqref{f} has no solutions in $Q$ for  $\lambda >\lambda^*$ we conclude that 
$(x^*,\lambda^*)$  is a  maximal solution  of \eqref{f} in $Q$.

It is known that any irreducible off-diagonal sign-constant matrix  has a unique strictly
positive eigenvector, with a real simple eigenvalue  \cite{Varga}. Hence, by {\rm\textbf{(R)}},  ${\rm dim Ker} J_x f(x^*,\lambda^*)={\rm dim Ker}(J_x f(x^*,\lambda^*))^T=1$ and  the right  and left eigenvectors $\zeta^*$, $\xi^*$ of $J_x f(x^*,\lambda^*)$ are  strongly positive. From this and since  $h(x^*)\neq 0$, $h(x^*) \geq 0$, we obtain 
$$
	\langle\frac{d}{d \lambda}f(x^*,\lambda^*),\xi^* \rangle =\left\langle h(x^*),\xi^*\right\rangle \neq 0,
	$$
	whence since ${ span}(\xi^*) ={\rm Ker}(J_x f(x^*, \lambda^*))^T$, it follows that	
	$$
	\frac{\partial}{\partial \lambda} f(x^*, \lambda^*) \not\in R(J_x f(x^*, \lambda^*)).
	$$
Thus, all the assumptions of Lemma \ref{lemkell}	are satisfied. Consequently,  $(x^*,\lambda^*)$  is a  maximal saddle-node bifurcation point  of \eqref{f} in $Q$.

\section{Examples of application}

\subsection{ Example 1.}
Consider the so-called system of power flow equations with two buses
\cite{Ajjar}
\begin{equation}\label{PF}
	\left\{
	\begin{aligned}
		&-v\sin(\theta)=\lambda p, \\
		&~~~v\cos(\theta)-v^2=\lambda q,
		\end{aligned}
	\right. 
\end{equation}
Here $\theta\in  (-\pi/2,\pi/2 )$ and $v>0$ are unknown variables, $p,q \geq 0$ are given and $\lambda \in \mathbb{R}$ is the so-called  load parameter \cite{Ajjar}. The saddle-node bifurcation of system \eqref{PF} corresponds to the so-called maximum load capacity of power system,  and its finding plays a crucial role  in the control of the voltage stability of power systems \cite{Ajjar}.

Let us show that \eqref{PF} has a saddle-node bifurcation.
Assume that $p,q>0$. Denote $Q=(-\pi/2,\pi/2 )\times \mathbb{R}^+$ and introduce
$$
r_1(\theta,v)=-\frac{v}{p}\sin(\theta),~~r_2(\theta,v)=\frac{1}{q}(v\cos(\theta)-v^2).
$$
Consider 
\begin{equation}\label{MiMPF}
	\lambda^*=\sup_{(\theta,v)\in Q}\min\{-\frac{v}{p}\sin(\theta),\frac{1}{q}(v\cos(\theta)-v^2)\}
	\end{equation}
Since $\min\{	r_1(0,1/2),r_2(0,1/2)\}=1/4q>0$, we infer that $\lambda^*\geq 1/4q>0$. Using this it is not hard to show  that there exists a maximizer $(\theta^*,v^*) \in Q$ of \eqref{MiMPF}. Moreover, $(\theta^*,v^*)$ is an internal point in $Q$, i.e., $\theta^* \neq -\pi/2,\pi/2$ and $v^* \neq 0$. Observe, the Jacobian matrix
$$
 J_{\theta,v} f(\theta,v,\lambda)={\begin{pmatrix}-\frac{v}{p}\cos(\theta) &~-\frac{1}{p}\sin(\theta)\\-\frac{v}{q}\sin(\theta)&\frac{1}{q}(\cos(\theta)-2v)\end{pmatrix}}
$$
satisfies condition \textbf{(R)}.	Hence applying Theorem \ref{thm1} we deduce 
\begin{lemma} Assume $p,q>0$. 
Then $\lambda^*<+\infty$ and there exists a maximizer $(\theta^*,v^*)$ of \eqref{MiMPF}. Furthermore, 
	$(\theta^*,v^*, \lambda^*)$ is a  maximal saddle-node bifurcation point  of \eqref{PF}  in $(-\pi/2,\pi/2 )\times \mathbb{R}^+$. 	Moreover,  ${\rm dim\, Ker}(J_{\theta,v} f(\theta^*,v^*, \lambda^*))=1$ and both right and left eigenvectors of $J_{\theta,v} f(\theta^*,v^*, \lambda^*)$ are  strongly positive.
\end{lemma}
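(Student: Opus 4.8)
The plan is to verify the hypotheses of Theorem \ref{thm1} for the map with $g(\theta,v)=(-v\sin\theta,\,v\cos\theta-v^2)$ and $h(\theta,v)=(p,q)$, and then simply invoke that theorem. Since $p,q>0$, the vector $h$ is a positive constant, so $h\ge 0$, $h\neq 0$ and $\accentset{\circ}{Q}=Q$ hold automatically; in particular the requirement $h_i(x^*)\neq 0$ of part $(2^o)$ is free. The genuine work is therefore threefold: show $\lambda^*$ is finite, show the supremum in \eqref{MiMPF} is attained at an interior point of $Q$, and confirm that condition \textbf{(R)} is in force where it is needed.

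For finiteness I would bound the second ratio: for every $(\theta,v)\in Q$ one has $r_2(\theta,v)=\frac{v}{q}(\cos\theta-v)\le \frac{v}{q}(1-v)\le \frac{1}{4q}$, so $\lambda(\theta,v)\le r_2(\theta,v)\le \frac{1}{4q}$ and hence $\lambda^*\le \frac{1}{4q}<+\infty$. Positivity of $\lambda^*$ comes from a single test point with $\theta<0$: at any $(\theta_0,v_0)$ with $\theta_0<0$ and $0<v_0<\cos\theta_0$ one has $r_1(\theta_0,v_0)>0$ and $r_2(\theta_0,v_0)>0$, whence $\lambda^*\ge \lambda(\theta_0,v_0)>0$.

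The main obstacle is the existence of an interior maximizer, which I would obtain by showing the positive superlevel sets of $\lambda$ are compact subsets of $Q$. First note that $\lambda(\theta,v)>0$ forces $r_1>0$ and $r_2>0$, i.e. $\sin\theta<0$ (so $\theta<0$) and $0<v<\cos\theta$; thus any point of positive value lies in $(-\pi/2,0)\times(0,1)$. Fixing $\epsilon=\lambda^*/2>0$, on the superlevel set $S_\epsilon=\{(\theta,v)\in Q:\lambda(\theta,v)\ge\epsilon\}$ the inequality $r_2\ge\epsilon$ reads $v(\cos\theta-v)\ge \epsilon q$, which confines $v$ to an interval $[v_-,v_+]\subset(0,1)$ with $v_\pm>0$ and forces $\cos\theta\ge 2\sqrt{\epsilon q}$, bounding $\theta$ away from $-\pi/2$; simultaneously $r_1\ge\epsilon$ gives $-\sin\theta\ge \epsilon p/v\ge \epsilon p$, bounding $\theta$ away from $0$. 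Hence $S_\epsilon$ is a closed and bounded subset of $Q$ that stays at positive distance from the boundary lines $\theta=-\pi/2$, $\theta=0$, $v=0$ and from infinity, so it is compact; since $\lambda$ is continuous and $\lambda<\epsilon$ off $S_\epsilon$, the supremum $\lambda^*$ is attained at some interior point $(\theta^*,v^*)\in S_\epsilon\subset Q$.

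It remains to check condition \textbf{(R)} at the maximizer. Both off-diagonal entries of $J_{\theta,v}f$ carry the sign of $-\sin\theta$ and vanish precisely when $\theta=0$; consequently $J_{\theta,v}f$ is off-diagonal sign-constant and irreducible throughout the region $\theta<0$, while on the line $\theta=0$ it degenerates to a diagonal, hence reducible, matrix. Since the maximizer satisfies $\theta^*<0$, condition \textbf{(R)} holds at $(\theta^*,v^*)$, which is precisely where the argument in the proof of Theorem \ref{thm1}$(2^o)$ invokes it. I would also remark that the maximality part $(1^o)$ of Theorem \ref{thm1} follows solely from the minimax formula \eqref{MiMPF} and uses no sign condition, so $(\theta^*,v^*,\lambda^*)$ is maximal over all of $Q$ notwithstanding the degeneracy at $\theta=0$. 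Feeding these verified hypotheses into Theorem \ref{thm1} then yields that $(\theta^*,v^*,\lambda^*)$ is a maximal saddle-node bifurcation, with ${\rm dim\,Ker}(J_{\theta,v}f(\theta^*,v^*,\lambda^*))=1$ and strongly positive left and right eigenvectors.
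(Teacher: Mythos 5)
Your proposal is correct and follows the same overall strategy as the paper (verify the hypotheses of Theorem \ref{thm1} for $g=(-v\sin\theta,\,v\cos\theta-v^2)$, $h=(p,q)$ and invoke it), but you fill in, and in two places actually repair, steps that the paper treats loosely. First, the paper certifies $\lambda^*>0$ via the test point $(\theta,v)=(0,1/2)$, claiming $\min\{r_1(0,1/2),r_2(0,1/2)\}=1/4q$; in fact $r_1(0,1/2)=0$, so that minimum is $0$ and the paper's test point only gives $\lambda^*\ge 0$. Your test point with $\theta_0<0$, $0<v_0<\cos\theta_0$ gives a genuinely positive lower bound, which is what the rest of the argument needs. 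Second, the paper asserts that the Jacobian "satisfies condition \textbf{(R)}" on all of $Q$, but on the line $\theta=0$ both off-diagonal entries vanish, the matrix is diagonal and hence reducible, so \textbf{(R)} as stated fails there. You notice this and resolve it correctly: positivity of $\lambda^*$ forces any maximizer to satisfy $\theta^*<0$ (and $0<v^*<\cos\theta^*$), \textbf{(R)} holds on the region $\theta<0$, and an inspection of the proof of Theorem \ref{thm1} shows \textbf{(R)} is only ever invoked at the maximizer, while part $(1^o)$ (maximality over all of $Q$) uses only the formula \eqref{MiMPF} and the sign of $h$. (Equivalently, one could apply Theorem \ref{thm1}$(2^o)$ on $(-\pi/2,0)\times\mathbb{R}^+$ and then use $(1^o)$ on all of $Q$ for maximality.) Third, where the paper says the existence of an interior maximizer "is not hard to show," you supply the actual compactness argument: the bound $\lambda\le r_2\le \frac{1}{4q}$ gives finiteness, and the superlevel set $\{\lambda\ge\lambda^*/2\}$ is a compact subset of $Q$ bounded away from $\theta=\pm\pi/2$, $\theta=0$, $v=0$ and $v=1$, so the supremum is attained at an interior point. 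In short, your write-up proves the same lemma by the same route, but at a level of rigor exceeding the paper's own sketch.
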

\begin{remark}
	For this simple case  a similar result can be obtained directly by solving  system  \eqref{PF} (see, e.g., \cite{Ajjar}, Sec.1.3.1). However, we are not sure that this approach is simpler than what is proposed above (cf. \cite{Ajjar}). Moreover, we conjecture that our approach can be generalized to systems of power flow equations of large dimensions. 
\end{remark}

\subsection{ Example 2.} 
 Consider the boundary value  problem with the so-called nonlinearity of convex-concave type
\begin{equation}\label{p}
	\left\{
	\begin{aligned}
		&-\Delta u = \lambda u^{q}+p(u)~~\mbox{in}~~\Omega, \\
		&~u\geq 0~~\mbox{in}~~\Omega,\\
		&~~~~ u=0~~\mbox{on}~~\partial\Omega,
	\end{aligned}
	\right. 
\end{equation}
where $\Omega$ is a bounded smooth domain in $\mathbb{R}^N$, $N \geq 1$.
Assume that $0<q<1$, $p \in C^1(\mathbb{R})$ and
\begin{description}
	\item[(H)] $\displaystyle{\lim_{t\to +\infty} \frac{p(t)}{t^q} =+\infty}$,~~ $\displaystyle{\lim_{t\to 0} \frac{p(t)}{t} =0}$.
\end{description}
An example of the function $ p $ is as follows $p(s)=s^\gamma$, $s \in \mathbb{R}^+$ where $1<\gamma<+\infty$. Thus, in this case, the nonlinearity  $ \lambda u^{q}+u^\gamma$ is true convex-concave.

We  use finite differences to approximate this problem and for the sake of simplicity, we restrict ourselves to the case $N=1$.    

Assume that $\Omega=(0,L)$, $L>0$. Set  $x_i=i \cdot \tau$, $u_i=u(x_i)$,  $1\leq i\leq n$, where $\tau=L/(n+1)$. For the second derivatives at $n$ mesh points we used  a standard second-order finite difference approximation. This yields for \eqref{p} the following approximating system of $n$ nonlinear  equations
\begin{equation}
\label{dis1} 
	\begin{cases}
	-\frac{u_{i+1}-2 u_i +u_{i-1}}{\tau^2} - p(u_i)- \lambda u_i^q=0, \\
	~ 1\leq  i\leq n,
	\end{cases}
\end{equation}
where $u=(u_1, \ldots, u_n) \in S:=\{u \in \mathbb{R}^n : ~u_i > 0, ~i=1,\ldots,n\}$, $u_0=u_{n+1}=0$.

Then we have
\begin{eqnarray*}
&& f_1(u, \lambda)=-\frac{u_{2}-2 u_1 }{\tau^2 }- p(u_1) - \lambda u_1^q, \\
&& f_n(u, \lambda)=-\frac{-2 u_n +u_{n-1}}{\tau^2 } - p(u_n) - \lambda u_n^q,\\
&& f_i(u, \lambda)=-\frac{u_{i+1}-2 u_i +u_{i-1}}{\tau^2 }- p(u_i)  - \lambda u_i^q,~~i=2,...,n-1,
\end{eqnarray*}
Introduce
\begin{eqnarray*}
&& r_1(u)=\frac{-u_{2}+2 u_1 -\tau^2 p(u_1)}{\tau^2 u_1^q},~
 r_n(u)=\frac{2 u_n -u_{n-1}-\tau^2 p(u_n)}{\tau^2 u_n^q},	\\
&& r_i(u)=\frac{-u_{i+1}+2 u_i -u_{i-1}-\tau^2 p(u_i)}{\tau^2 u_i^q},~~i=2,...,n-1.
\end{eqnarray*}
The nonlinear generalize Collatz-Wieland formula for \eqref{dis1}  is as follows:
\begin{equation}\label{MiMCC}
	\lambda^*=\sup_{u\in S}\lambda(u)=\sup_{u\in S}\min_{i:u_i\neq 0} r_i(u),
	\end{equation}
	
\begin{lemma}\label{L-CC} Assume that {\rm \textbf{(H)}} holds true. Then $\lambda^*<+\infty$ and there exists a maximizer $u^* \in S$ of \eqref{MiMCC}. Furthermore, 
	$(u^*,\lambda^*)$ is a  maximal saddle-node bifurcation point  of \eqref{dis1} in $S$. 	Moreover, ${\rm dim\, Ker}(J_u f(u^*,\lambda^*))=1$ and both right and left eigenvectors of $J_u f(u^*,\lambda^*)$ are  strongly positive.
\end{lemma}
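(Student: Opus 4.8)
The plan is to verify the hypotheses of Theorem~\ref{thm1} for system \eqref{dis1}, namely that the Jacobian satisfies condition \textbf{(R)}, that $h(u) \geq 0$ and $h(u) \neq 0$ on $S$, that $\lambda^* < +\infty$, and most importantly that the supremum in \eqref{MiMCC} is attained at an interior maximizer $u^* \in S$ with all components positive. Here the role of $h$ is played by $h_i(u) = u_i^q$, which is strongly positive on $S$, so the standing hypotheses $h \geq 0$, $h \neq 0$ are immediate. For condition \textbf{(R)}, I would compute $J_u f(u,\lambda)$ explicitly: its off-diagonal entries are the constants $-1/\tau^2 < 0$ on the sub- and super-diagonals (coming from the discrete Laplacian) and $0$ elsewhere, while the diagonal entries are $2/\tau^2 - p'(u_i) - \lambda q u_i^{q-1}$. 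Thus the off-diagonal part is sign-constant (all $\leq 0$), and irreducibility follows from the tridiagonal structure with nonvanishing sub- and super-diagonal entries, which corresponds to the connected path graph and cannot be block-triangularized by any permutation.

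The substantive analytic work lies in showing $\lambda^* < +\infty$ together with the existence and interior location of a maximizer. First, to see $\lambda^* > -\infty$ (positivity-type lower bound) I would exhibit a test vector, e.g. the discretization of $\sin(\pi x/L)$ or a small constant profile, for which $\lambda(u) > 0$; the second condition in \textbf{(H)}, $p(t)/t \to 0$ as $t\to 0$, guarantees that for small-amplitude profiles the ratios $r_i(u)$ are bounded below away from $-\infty$ and in fact positive, so that $\lambda^* \geq \delta > 0$. Second, to prove $\lambda^* < +\infty$ I would use the first condition in \textbf{(H)}, the superlinearity $p(t)/t^q \to +\infty$ as $t \to +\infty$. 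The key observation is that $\lambda(u) = \min_i r_i(u)$, and one can bound $\lambda(u)$ from above by a weighted average of the $r_i(u)$ against the positive discrete eigenfunction $\phi$ of the tridiagonal Laplacian (with eigenvalue $\mu_1 > 0$): summing $\phi_i \, u_i^q \, r_i(u)$ telescopes the discrete Laplacian term via summation by parts into $\mu_1 \sum_i \phi_i u_i u_i^{q-1}$-type quantities, while the $-p(u_i)$ contributions become increasingly negative as $\max_i u_i \to \infty$. This forces $\min_i r_i(u) \to -\infty$ along any sequence leaving every compact subset of $S$, giving both the finite bound on $\lambda^*$ and coercivity.

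For existence of the maximizer, I would take a maximizing sequence $u^{(k)}$ for \eqref{MiMCC} and argue that, because $\lambda^* > 0$ is strictly positive, the sequence stays bounded and bounded away from the boundary $\partial S$. Boundedness from above follows from the superlinear upper bound just described; the crucial point is ruling out escape to the boundary, i.e. $u_i^{(k)} \to 0$ for some $i$. If some component tended to $0$, then by the second limit in \textbf{(H)} and the structure of the discrete equation the corresponding ratio $r_i(u^{(k)})$ would have to drop below $\lambda^*$ (intuitively, a nodal-type degeneration makes one of the quotients collapse), contradicting $\lambda(u^{(k)}) \to \lambda^* > 0$. This is the step I expect to be the main obstacle, since it requires a careful estimate relating a vanishing component $u_i$ to the neighboring components through the discrete operator and the convex-concave nonlinearity, exploiting the sublinear growth $u_i^q$ of the denominator near zero. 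Once $u^{(k)}$ is confined to a compact subset of $S$, continuity of each $r_i$ on $S$ and of the pointwise minimum yields a limit $u^* \in S$ with $\lambda(u^*) = \lambda^*$ and $u_i^* > 0$ for all $i$, so $h_i(u^*) = (u_i^*)^q \neq 0$. Finally, with all the hypotheses of Theorem~\ref{thm1} verified, part $(2^o)$ of that theorem directly delivers the conclusion: $(u^*,\lambda^*)$ is a maximal saddle-node bifurcation of \eqref{dis1} in $S$, ${\rm dim\,Ker}(J_u f(u^*,\lambda^*)) = 1$, and the right and left eigenvectors are strongly positive.
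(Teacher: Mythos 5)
Your proposal follows the same overall route as the paper: verify the hypotheses of Theorem~\ref{thm1} for \eqref{dis1}, with condition \textbf{(R)} read off from the tridiagonal Jacobian, $\lambda^*>0$ from a small test profile via the second limit in \textbf{(H)}, and compactness of maximizing sequences from the first limit in \textbf{(H)}. The one genuinely different ingredient is your treatment of the regime $\|u^k\|\to\infty$: the paper normalizes, $u^k=t^kv^k$ with $\|v^k\|=1$, extracts a subsequence along which some $v_i^{k_j}\to\delta>0$, and shows from the scaling identity \eqref{Z} that this single ratio $r_i(u^{k_j})\to-\infty$; you instead test against the positive eigenvector $\phi$ of the discrete Laplacian and sum by parts, getting $\lambda(u)=\min_i r_i(u)\le\bigl(\mu_1\sum_i\phi_iu_i-\sum_i\phi_ip(u_i)\bigr)/\sum_i\phi_iu_i^{q}$. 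Both work and are of comparable length; yours is global and closer in spirit to PDE a priori estimates, the paper's is componentwise. A caveat that applies equally to both: since the Laplacian contribution is of order $M^{1-q}$ with $M=\max_iu_i$, one really needs $p(M)/M\to\infty$ (true superlinearity, as in the model $p(s)=s^{\gamma}$, $\gamma>1$), not merely $p(M)/M^{q}\to\infty$ as \textbf{(H)} literally states --- e.g.\ $p(t)=t^{(1+q)/2}$ for large $t$ satisfies \textbf{(H)} yet makes $\lambda(t\phi)\to+\infty$; the paper's own proof reads \textbf{(H)} in this stronger way, and so do you. Finally, your discrete-sine test vector for $\lambda^*>0$ is correct and lies in $S$ (the paper instead uses the spike $(\delta,0,\dots,0)$, which lies in $\overline{S}\setminus S$, and saves the sine for Example~3), but the constant profile you mention in passing would not do: for interior indices it gives $r_i=-p(\delta)/\delta^{q}$, which is negative whenever $p(\delta)>0$.

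The step you single out as ``the main obstacle'' --- ruling out components tending to zero along a maximizing sequence --- is in fact the easiest part, and it needs nothing about the neighboring components beyond their nonnegativity. Since $u_{i\pm1}\ge0$ on $\overline{S}$,
\begin{equation*}
r_i(u)\;\le\;\frac{2u_i-\tau^{2}p(u_i)}{\tau^{2}u_i^{q}}\;=\;\frac{2}{\tau^{2}}\,u_i^{1-q}-\frac{p(u_i)}{u_i}\,u_i^{1-q}\;\longrightarrow\;0
\qquad\text{as }u_i\to0^{+},
\end{equation*}
by the second limit in \textbf{(H)}. Hence if some component of a maximizing sequence tended to $0$, then $\limsup_k\lambda(u^k)\le\limsup_k r_i(u^k)\le0<\lambda^*$, a contradiction. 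This one estimate covers simultaneously the case $u^k\to0$ and the case of a limit point on the boundary of $S$, and it is essentially how the paper concludes $u^*\neq0$ and $u^*\in S$. Note the ratios need not tend to $-\infty$ here (along $u=t\phi$, $t\to0^{+}$, one has $\lambda(u)\to0$), so it is the strict positivity $\lambda^*>0$ that closes the argument --- which is also why your earlier phrase ``$\lambda(u)\to-\infty$ along any sequence leaving every compact subset of $S$'' is an overstatement, though a harmless one given how you split the cases. With this estimate supplied, your outline is complete: the maximizing sequence stays in a compact subset of $S$, a limit point $u^*$ is an interior maximizer with $h_i(u^*)=(u_i^*)^{q}>0$ for all $i$, and Theorem~\ref{thm1}$(2^o)$ yields every conclusion of the lemma, exactly as in the paper.
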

\begin{proof} To prove the lemma it is sufficient to verify for \eqref{dis1} that all the assumptions of  Theorem \ref{thm1} are satisfied. 

Let us show  that $\lambda^*>0$. Take $u(\delta)=(u_1(\delta), \ldots, u_n(\delta))$ such that $u_1(\delta)=\delta>0$, $u_j(\delta)=0$, $j=2,\ldots, n$. Then $u(\delta) \in \overline{S} \setminus 0$ and
$$
\lambda^*\geq \lambda(u(\delta))=\min_{i:u_i\neq 0}r_i(u(\delta))=r_1(\delta)=\delta^{1-q}(\frac{2}{\tau^2}-\frac{p(\delta)}{\delta})>0
$$
for sufficiently small $\delta$, since assumption \textbf{(H)} implies that $\displaystyle{\frac{p(\delta)}{\delta} \to 0}$ as $\delta \to 0$.

For  $u=tv$, $t=\|u\|$, $\|v\|=1$, we have 
\begin{equation}\label{Z}
	\left\{
\begin{aligned}
	& r_1(tv)=t^{1-q}\frac{-v_{2}+2 v_1 }{\tau^2 (v_1)^q}-\frac{p(t v_1) }{\tau^2 (tv_1)^q} ,\\
	& r_n(tv)=t^{1-q}\frac{2 v_n -v_{n-1}}{\tau^2 (v_n)^q}-\frac{p(t v_n) }{\tau^2 (t v_n)^q},	\\
	& r_i(tv)=t^{1-q}\frac{-v_{i+1}+2 v_i -v_{i-1}}{\tau^2 (v_i)^q}-\frac{p(t v_i) }{\tau^2 (t v_i)^q},~~i=2,...,n-1,	
	\end{aligned}
	\right.
\end{equation}
Let $(u^k)$ be a maximizer sequence of \eqref{MiMCC}, i.e., $\lambda(u^k) \to \lambda^*$ as $k \to \infty$. Suppose that   $||u^k||:=t^k \to \infty$.
 Then there exists a subsequence,  again denoted by $(u^k)$, such that 
$ \lambda(u^k)\to -\infty$ as $k \to \infty$. Indeed, since $||v^k||=1$, there exists $i \in \{1,\ldots,n\}$  and a subsequence $v^{k_j}_i$ such that $v^{k_j}_i \to \delta>0$ as $k_j \to +\infty$. Hence, (\textbf{H}) implies that $p(t^{k_j} v^{k_j}_i)/\tau^2 (t^{k_j} v^{k_j}_i)^q \to -\infty$ as $k_j \to +\infty$ and consequently, by \eqref{Z} we have  $\lambda(u^{k_j}):=\min_{i} r_i(u^{k_j}) \to -\infty$ as $k_j \to \infty$. However, by assumption we have $\lambda(u^{k_j}) \to \lambda^*>0$. Thus $||u^k||$ is bounded and therefore there exists a limit point $u^* \in \overline{S}$ of $(u^k)$.  Passing to a subsequence if it's necessary, we may assume that $u^*=\lim_{k\to \infty}u^k$. We claim that $u^*\neq 0$.
Indeed, assume  $t^k:=||u^k|| \to 0$. First, let us suppose that there exists $\sigma>0$ such that $|v^k_i|>\sigma>0$, $\forall i=1,\ldots,n$ and  $k=1,\ldots $. Then from \eqref{Z} it follows that  $\lambda(u^{k}) \to 0$ as $k \to \infty$ which contradictions to $\lambda^*>0$. In the case $v^k_i \to 0$ for some $i\in \{2,\ldots,n-1\}$, we have
$$
r_i(t^kv^k)\leq (t^k)^{1-q}\frac{2 v^k_i }{\tau^2 (v^k_i)^q} \to 0,
$$
and similarly, $\lim_{k \to \infty}r_i(t^kv^k)\leq 0$ if $v^k_i \to 0$ for $i=1,n$. Thus, we get a contradiction  and $u^*\neq 0$.  In the same manner we can see that $u^* \in S$. 


Observe that the Jacobian matrix of $f(u,\lambda)$ has the following tridiagonal form:
$$
 J_u f(u,\lambda)={\begin{pmatrix}\frac{\partial}{\partial u_1}f_1(u,\lambda) &~-\frac{1}{\tau^2}\\-\frac{1}{\tau^2}&\frac{\partial}{\partial u_2}f_2(u,\lambda)&~-\frac{1}{\tau^2}\\&-\frac{1}{\tau^2}&\ddots &\ddots \\&&\ddots &\ddots &~-\frac{1}{\tau^2}\\&&&-\frac{1}{\tau^2}&\frac{\partial}{\partial u_n}f_n(u,\lambda)\end{pmatrix}}.
$$
Hence, we see that  $J_u f(u,\lambda)$ is a irreducible  off-diagonal sign-constant  matrix for any $u \in S$ and $\lambda \in \mathbb{R}$. Thus  condition \textbf{(R)} is satisfied for \eqref{dis1}.	
 Hence we see that all the assumptions of Theorem \ref{thm1} are satisfied and this completes the proof of the lemma.
\end{proof}

\subsection{ Example 3.} 
 Consider the Liouville-Bratu-Gelfand problem \cite{bratu, gelfand, Joseph}
\begin{equation}\label{BG}
	\left\{
	\begin{aligned}
		&-\Delta u = \lambda e^u~~\mbox{in}~~\Omega, \\
		&~~~~ u=0~~\mbox{on}~~\partial\Omega.
	\end{aligned}
	\right. 
\end{equation}
For the sake of simplicity, we restrict ourselves to the case $N=1$ so that we take $\Omega=(0,L)$, $L>0$. Set  $x_i=i \cdot \tau$, $u_i=u(x_i)$,  $1\leq i\leq n$, where $\tau=L/(n+1)$. Using  a standard second-order finite difference approximation we derive for \eqref{BG} the following approximating system of $n$ nonlinear  equations
\begin{equation}
\label{dis2} 
	\begin{cases}
	-\frac{u_{i+1}-2 u_i +u_{i-1}}{\tau^2} -  \lambda e^{u_i}=0, \\
	~ 1\leq i\leq n,
	\end{cases}~
\end{equation}
where $u=(u_1, \ldots, u_n) \in S$ and $u_0=u_{n+1}=0$. 

The nonlinear generalize Collatz-Wieland formula now reads as follows:
\begin{equation}\label{MiMBG}
	\lambda^*=\sup_{u\in S}\lambda(u)=\sup_{u\in S}\min_{i:u_i\neq 0} r_i(u),
	\end{equation}
	where
	\begin{eqnarray*}
&& r_1(u)=\frac{-u_{2}+2 u_1 }{\tau^2  e^{u_1}},~~~
 r_n(u)=\frac{2 u_n -u_{n-1}}{\tau^2  e^{u_n}},	\\
&& r_i(u)=\frac{-u_{i+1}+2 u_i -u_{i-1}}{\tau^2  e^{u_i}},~~i=2,...,n-1.
\end{eqnarray*}
\begin{lemma}\label{L-BG} $\lambda^*<+\infty$ and there exists a maximizer $u^*$ of \eqref{MiMBG}. Furthermore, 
	$(u^*,\lambda^*)$ is a  maximal saddle-node bifurcation point  of \eqref{dis2} in $S$. 	Moreover, there holds ${\rm dim\, Ker}(J_u f(u^*,\lambda^*))=1$ and both right and left eigenvectors of $J_u f(u^*,\lambda^*)$ are  strongly positive.
\end{lemma}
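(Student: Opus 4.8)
The plan is to verify, exactly as in the proof of Lemma~\ref{L-CC}, that system \eqref{dis2} satisfies all the hypotheses of Theorem~\ref{thm1} with $Q=S$, $g_i(u)=-(u_{i+1}-2u_i+u_{i-1})/\tau^2$ and $h_i(u)=e^{u_i}$. Note first that here $h_i(u)=e^{u_i}>0$ for every $u$ and every $i$, so the requirements $h(u)\ge 0$, $h(u)\ne 0$ hold trivially, every index is active in the definition of $\lambda(u)$, and in the notation of Section~2 one has $\accentset{\circ}{Q}=S$. This is precisely why no growth hypothesis analogous to \textbf{(H)} is needed: the exponential weight already supplies the correct behaviour both near $0$ and at infinity.

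For the finiteness $\lambda^*<+\infty$ I would bound $\lambda(u)$ from above by a single component. Since $-u_2\le 0$ one has $r_1(u)\le 2u_1/(\tau^2 e^{u_1})\le 2/(e\tau^2)$, using $\max_{t>0} t e^{-t}=e^{-1}$; hence $\lambda(u)\le r_1(u)\le 2/(e\tau^2)$ for all $u\in S$. To obtain the strict lower bound $\lambda^*>0$ I would exhibit one admissible test vector at which $\lambda$ is positive: for instance a small multiple $u\in S$ of the discrete principal eigenvector $(\sin(\pi i/(n+1)))_i$ of the tridiagonal Laplacian satisfies $-u_{i+1}+2u_i-u_{i-1}=\tau^2\mu_1 u_i>0$, so that $r_i(u)=\mu_1 u_i/e^{u_i}>0$ for every $i$ and therefore $\lambda(u)=\min_i r_i(u)>0$ (alternatively one may argue through a boundary configuration, as in Lemma~\ref{L-CC}).

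The heart of the argument, as in Lemma~\ref{L-CC}, is the existence of a maximizer $u^*\in S$, and this is the step I expect to be the main obstacle. Let $(u^k)\subset S$ be a maximizing sequence, $\lambda(u^k)\to\lambda^*$, and write $u^k=t^k v^k$ with $t^k=\|u^k\|$ and $\|v^k\|=1$. If $t^k\to\infty$ I would pass to a subsequence with $v^k\to\bar v\ge 0$, $\|\bar v\|=1$, choose an index $i_0$ with $\bar v_{i_0}>0$, and observe that the numerator of $r_{i_0}(u^k)$ is of order $t^k$ while the denominator $\tau^2 e^{t^k v^k_{i_0}}$ grows at least like $e^{t^k\bar v_{i_0}/2}$; hence $r_{i_0}(u^k)\to 0$ and $\lambda(u^k)\le r_{i_0}(u^k)\to 0$, contradicting $\lambda^*>0$. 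Thus $(u^k)$ is bounded and, up to a subsequence, $u^k\to u^*\in\overline{S}$. The same strict positivity of $\lambda^*$ rules out the two degenerate limits: if $t^k\to 0$ then $r_i(u^k)\to 0$ for every $i$, whereas if some $u^*_i=0$ then $r_i(u^k)\to -(u^*_{i-1}+u^*_{i+1})/\tau^2\le 0$ (with the convention $u^*_0=u^*_{n+1}=0$); in either case $\lambda^*\le 0$, a contradiction. Therefore $u^*\in S$ and it is a maximizer of \eqref{MiMBG}.

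Finally I would check condition \textbf{(R)}: the Jacobian $J_u f(u,\lambda)$ is tridiagonal with constant off-diagonal entries $-1/\tau^2<0$ and diagonal entries $2/\tau^2-\lambda e^{u_i}$, so it is off-diagonal sign-constant and, since its sub- and super-diagonals never vanish, irreducible, for every $u\in S$ and $\lambda\in\mathbb{R}$. As $u^*\in S=\accentset{\circ}{Q}$ gives $h_i(u^*)=e^{u^*_i}\ne 0$ for all $i$, every hypothesis of Theorem~\ref{thm1}$(2^o)$ is met, and the theorem yields at once that $(u^*,\lambda^*)$ is a maximal saddle-node bifurcation of \eqref{dis2} in $S$, that ${\rm dim\,Ker}(J_u f(u^*,\lambda^*))=1$, and that the right and left eigenvectors are strongly positive. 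All of these computations are elementary; the only genuinely delicate point is the compactness argument for the maximizing sequence, which is why I single it out as the main obstacle.
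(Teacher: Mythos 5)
Your proposal is correct and follows essentially the same route as the paper's proof: establish $\lambda^*>0$ with a positive test vector, show any maximizing sequence is bounded because $\lambda(u^k)\to 0$ when $\|u^k\|\to\infty$, rule out boundary limit points since $u_i^*=0$ forces $r_i(u^*)\le 0$, verify condition \textbf{(R)} for the tridiagonal Jacobian, and invoke Theorem \ref{thm1}. Your refinements are only cosmetic relative to the paper — the explicit bound $\lambda^*\le 2/(e\tau^2)$, the discrete eigenvector $(\sin(\pi i/(n+1)))_i$ in place of the paper's $(\sin(i\tau))_i$ (yours is in fact the more robust choice, since the paper's vector lies in $S$ only for small $L$), and spelling out the decay estimate behind the paper's ``we see at once.''
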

\begin{proof}
Take $\bar{v}=(v_1,\ldots,v_n)$ so that $v_i=sin(i \cdot \tau)$, $i=1,...,n$. Then $\bar{v} \in S$ and  $\lambda^*\geq \lambda(\bar{v})=\min_{i=1}^nr_i(\bar{v})>0$.

Let $(u^k)$ be a maximizer sequence of \eqref{MiMBG}, i.e., $\lambda(u^k) \to \lambda^*$ as $k \to \infty$. Suppose that   $||u^k||:=t^k \to \infty$. Then we see at once that $ \lambda(u^k)\to 0$ as $k \to \infty$ which contradicts to the strong inequality $\lambda^*>0$. Thus, $||u^k||$ is bounded and there exists a limit point $u^* \in \overline{S}$ of $(u^k)$ and by continuity,  $\lambda^*=\lambda(u^*)$. Observe that if $u_i^*=0$ for some $i =1,\ldots,n$, then $r_i(u^*)\leq 0$, which implies a contradiction. Thus, $u_i^*>0$ for every $i =1,\ldots,n$.

Similar to the proof of Lemma \ref{L-CC}, one can check that the Jacobian matrix $J_u f(u,\lambda)$ has the  tridiagonal form and satisfies condition \textbf{(R)}. Hence and since $u^*\in S$, the proof of the lemma follows from Theorem \ref{thm1}.
	
\end{proof}

\bibliographystyle{amsplain}

\end{document}